\newtheorem{thm}{Theorem}[section]
\newtheorem{lem}[thm]{Lemma}
\newcommand{\Z}{\mathbb{Z}}
\newcommand{\R}{\mathbb{R}}
\newcommand{\C}{\mathbb{C}}
\newcommand{\Q}{\mathbb{Q}}
\def\fS{{\mathfrak S}}
\def\PPhi{{\Phi}}
\def\cB{{\mathcal B}}
\def\cJ{{\mathcal J}}
\def\SL{{\rm SL}}
\def\GL{{\rm GL}}
\def\End{{\rm End}}
\def\Ind{{\rm Ind}}
\def\Irr{{\mathbf {Irr}}}
\def\Prim{{\rm Prim}}
\def\cG{{\mathcal G}}
\def\cZ{{\mathcal Z}}
\def\cT{{\mathcal T}}
\def\fa{{\mathfrak a}}
\def\fb{{\mathfrak b}}
\def\fc{{\mathfrak c}}
\def\fd{{\mathfrak d}}
\def\fM{{\mathfrak M}}
\def\ft{{\mathfrak t}}
\def\fB{{\mathfrak B}}
\def\fI{{\mathfrak I}}
\def\fK{{\mathfrak K}}
\def\fN{{\mathfrak N}}
\def\fL{{\mathfrak L}}
\def\fA{{\mathfrak A}}
\def\fB{{\mathfrak B}}
\def\Ad{{\rm Ad}}
\def\q{{/\!/}}
\def\PPhi_F{{\rm Frob}}
\def\LA{{\overline{A}}}
\subjclass[2010]{22E50, 20G05}
\begin{document}
\title[$K$-theory and the connection index]{$K$-theory and the connection index}
\author{Tayyab Kamran and Roger Plymen}
\address{T. Kamran: Centre for Advanced Mathematics and
Physics, National University of Sciences and
Technology, H-12, Islamabad, Pakistan}
\email{tkamran@camp.nust.edu.pk}
\address{R.~Plymen: School of Mathematics, Manchester University, Manchester M13 9PL, England
\emph{and} School of Mathematics, Southampton University, SO17 1BJ, England}
\email{plymen@manchester.ac.uk, r.j.plymen@soton.ac.uk}
\date{\today}
\maketitle
\thanks{}

\begin{abstract}  
Let $\cG$ denote a split simply connected almost simple $p$-adic group. The classical example is the special linear group $\SL(n)$.   We study the $K$-theory of the unramified unitary principal series of $\cG$ and prove that the rank of  $K_0$ is the connection index $f(\cG)$.  We relate this result to a recent refinement of the Baum-Connes conjecture, and show explicitly how generators of $K_0$ contribute to the  $K$-theory of the Iwahori $C^*$-algebra $\fI(\cG)$.  
\end{abstract}

\section{Introduction}     Let $\cG$ denote a split simply connected almost simple $p$-adic
group. The classical example is the special linear group $\SL(n)$.   We study the $K$-theory of the unramified unitary principal series of  $\cG$ and prove
that the rank of $K_0$ is the connection index $f(\cG)$.

In the spirit of noncommutative geometry, we construct a noncommutative $C^*$-algebra, the  \emph{spherical} $C^*$-algebra $\mathfrak{S}(\cG)$. The primitive ideal spectrum $\Prim \, \fS(\cG)$ of $\fS(\cG)$ can be identified with the irreducible representations in the unramified unitary principal series of $\cG$.   The $C^*$-algebra 
$\fS(\cG)$ is a direct summand of the Iwahori $C^*$-algebra $\fI(\cG)$, and so contributes to the $K$-theory of $\fI(\cG)$. 

We  relate this result with the recent conjecture in \cite[\S7]{ABP3}: this is a version, adapted to the $K$-theory of $C^*$-algebras, 
of the geometric conjecture developed in  \cite{ABP1},\cite{ABP2},\cite{ABP3},\cite{ABP4}.  

Quite specifically, let $F$ be a local nonarchimedean field of
characteristic $0$, let $\cG$ be the group of $F$-rational points
in a split, almost simple, simply connected, semisimple linear
algebraic group defined over $F$, for example $\SL(n)$. Let $\cT$
denote a maximal split torus of $\cG$. Let $ \cG^{\vee},
\,\cT^{\vee}$ denote the Langlands dual groups, and let $G,T$
denote maximal compact subgroups:
\[
G \subset \, \cG^{\vee}, \quad T \subset \,\cT^{\vee}.\] Then $G$
is a compact connected Lie group, of adjoint type,  with maximal torus $T$.

Let $\pi_1(G)$ denote the fundamental group of $G$.   The order of $\pi_1(G)$ is the \emph{connection index}.   The connection index is a numerical invariant attached to 
$\cG$ denoted $f(\cG)$.  The notation $f$ is due to Bourbaki \cite[VI, p.240]{B}. 

The fundamental groups  are listed in \cite[Plates I--X,
p.265--292]{B}.  They are 
\[
\Z/(n+1)\Z,\quad \Z/2\Z,\quad \Z/2\Z,\quad \Z/4\Z\; (n\;
odd),\quad \Z/2\Z \times \Z/2\Z \;(n\; even)\] \[\Z/3\Z,\quad
\Z/2\Z, \quad 0,\quad 0,\quad 0
\]
for $A_n, B_n, C_n, D_n, E_6, E_7, E_8, F_4, G_2$, respectively.

The primitive ideal spectrum $\Prim \, \fS(\cG)$ can be identified with the unramified unitary principal series of $\cG$.  In this identification, each element in $\Prim \, \fS(\cG)$ is an irreducible constituent of a representation induced, by parabolic induction, from an unramified unitary character of $\cT$.  

When we  compare the $R$-group computations in \cite{K} with the
above list, we see that \emph{the maximal $R$-groups are
isomorphic to the fundamental groups}.   In this article, we explain this fact, via a geometrical approach.

The  $L$-packet in the unramified unitary principal series of $\cG$ with the maximal number of irreducible constituents has $f(\cG)$ constituents. 

\begin{thm}\label{main} Let $\fS(\cG)$ denote the spherical $C^*$-algebra  and let $f(\cG)$
be the connection index of $\cG$.  Then $K_0 \,\fS(\cG)$ is a free abelian group on $f(\cG)$ generators, and $K_1\, \fS(\cG) = 0$.    
 \end{thm}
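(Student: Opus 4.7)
The plan is to realize $\fS(\cG)$ as a type~I $C^*$-algebra with a transparent fibre structure over the compact orbit space $T/W$, and then to read off $K_0$ and $K_1$ from that structure. First I would describe $\fS(\cG)$ as the section algebra of a continuous field of finite-dimensional $C^*$-algebras over $T/W$: unramified unitary characters of $\cT$ are parametrized by the compact torus $T$, and a $W$-orbit $[\chi]$ corresponds to the $L$-packet of irreducible constituents of $\Ind_{\cB}^{\cG}\chi$. By the standard theory of $R$-groups, the fibre over a generic $[\chi]$ is $\C$, while over a special $[\chi]$ it is a direct sum of full matrix algebras indexed by $\widehat{R_\chi}$. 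The introduction records that the maximal $R$-group is isomorphic to $\pi_1(G)$, so the maximal fibre has exactly $f(\cG)$ simple summands.

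Second, I would pass to a Morita-equivalent commutative model: since each fibre is a direct sum of full matrix algebras, $\fS(\cG)$ is stably isomorphic to $C(X)$, where $X \to T/W$ is a branched cover whose sheet count above $[\chi]$ is $|\widehat{R_\chi}|$. In the language of \cite{ABP1,ABP3}, $X$ is the spherical piece of the extended quotient $T /\!/ W$. Consequently $K_j \,\fS(\cG) \cong K^j(X)$, reducing the problem to a topological computation.

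Third, I would compute $K^*(X)$ stratum by stratum. Each stratum is a quotient of a subtorus of $T$ by a finite subgroup of $W$, and the stratification is governed by the affine Weyl arrangement on $T$. A Mayer--Vietoris argument, or equivalently the cellular spectral sequence, then reduces the $K$-theory of $X$ to a sum of contributions from these strata together with the representation theory of the stabilizers. The expected vanishing of $K_1$ would come from the fact that the branched cover $X \to T/W$ redistributes the odd-degree $K$-theory of $T$ into trivial contributions after taking $W$-invariants on each stratum.

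The main obstacle is the uniform identification of the total rank of $K_0$ with the connection index $f(\cG) = |\pi_1(G)|$. A case-by-case verification using Bourbaki's tables \cite{B} and the explicit $R$-group computations of \cite{K} is feasible but unenlightening; the conceptual route is to produce $f(\cG)$ canonical generators of $K_0 \,\fS(\cG)$, namely the classes of the $f(\cG)$ irreducible constituents of the unique maximal $L$-packet, and then to show that every other $L$-packet contributes only $\Z$-linear combinations of these classes. This matching between Weyl-group combinatorics on $T$ and the fundamental group of the Langlands dual maximal compact group is where the bulk of the work is likely to lie.
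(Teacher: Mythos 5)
Your overall strategy (replace $\fS(\cG)$ by a topological model and count) is in the same spirit as the paper, but two of your steps have genuine problems. The more serious one is your second step: the claim that $\fS(\cG)$ is \emph{stably isomorphic} to $C(X)$ for (the spherical piece of) the extended quotient is false. Stable isomorphism preserves the primitive ideal space, and $\Prim\,\fS(\cG)$ is non-Hausdorff: at a point of reducibility the constituents of the $L$-packet are not separated from the nearby irreducible points, whereas $X$ is compact Hausdorff. Already for $\SL(2)$ the spherical algebra is a field over a closed interval whose fibre jumps to $\fK\oplus\fK$ at one endpoint, and such an algebra is not stably commutative. Having fibres that are sums of matrix algebras does not make a continuous field stably commutative; the way the summands glue into the generic fibre is exactly the obstruction. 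What one can hope for is an isomorphism \emph{in $K$-theory} with $K^*(X)$, but that is essentially the spherical case of the conjecture of \cite{ABP3} that the theorem is meant to feed into, so assuming it is circular. The paper instead proves a strong Morita equivalence $\fS(\cG)\sim C(\overline{A})\rtimes H_A$, where $\overline{A}$ is a closed alcove in the Lie algebra of $T$ and $H_A\simeq W'_a/W_a$ is its stabilizer in the extended affine Weyl group; establishing this requires verifying the hypotheses of \cite[Theorem 2.13]{PL} via Keys' $R$-group computations, i.e.\ the same analytic input you would need to justify your field description.

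The second problem is that your third and fourth steps contain no mechanism: ``Mayer--Vietoris over strata'' and ``the branched cover redistributes the odd-degree $K$-theory into trivial contributions'' describe the desired outcome rather than prove it, and you concede that identifying the total rank with $f(\cG)$ is open. The paper's mechanism is uniform and is the missing ingredient: $H_A$ is canonically isomorphic to $\pi_1(G)$, it acts affinely on the simplex $\overline{A}$ fixing the barycentre $x_0$, and the straight-line homotopy onto $x_0$ is $H_A$-equivariant. Hence $C(\overline{A})\rtimes H_A$ is homotopy equivalent to its fibre over $x_0$, which is $\C H_A\simeq\C^{f}$ because the intertwining representation at $t_0=\exp x_0$ is quasi-equivalent to the regular representation of $H_A$ (Keys again). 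This single equivariant contraction simultaneously kills $K_1$, exhibits the $f(\cG)$ generators as the characters of $H_A\simeq\pi_1(G)$ (equivalently, the constituents of the maximal $L$-packet), and shows that every other $L$-packet contributes nothing new. If you want to salvage your stratified approach, you must supply this equivariant contractibility of the closed alcove onto the maximal reducibility point, or some substitute for it.
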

 
 In Theorem (\ref{class}) we show explicitly how generators of $K_0 \, \fS(\cG)$ contribute to the $K$-theory of the Iwahori
 $C^*$-algebra $\fI(\cG)$. 
 
 Intuitively, we can observe a deformation retraction of the spherical $C^*$-algebra $\fS(\cG)$ onto the $L$-packet with $f(\cG)$ constituents. This $L$-packet is tempered.   When $\cG = \SL(n,F)$, this $L$-packet is elliptic; see \cite[Theorem 3.4]{G}.      Elliptic representations share with the discrete series the property that their Harish-Chandra characters are not identically zero on the regular elliptic set; and $K_0$ sees these elliptic representations as if they were $f(\cG)$ isolated points in the discrete series.

\section{The alcoves in the Lie algebra of $T$}

The
proof depends on the distinction between the affine Weyl group $W_a$ and
the extended affine Weyl group $W'_a$.   The quotient $W'_a/W_a$ is a finite abelian group which dominates the discussion.

Our reference at this point is \cite[IX, p.309--327]{B}.  Let $\ft$ denote the Lie algebra
of $T$, and let $\exp : \ft \to T$ denote the exponential map. The
kernel of $\exp$ is denoted $\Gamma(T)$.    The inclusion $\iota : T \to G$ induces the homomorphism $\pi_1(\iota): \pi_1(T) \to \pi_1(G)$.    
Now $f(G,T)$ will denote the composite of the canonical isomorphism
from $\Gamma(T)$ to $\pi_1(T)$ and the homomorphism  $\pi_1(\iota)$:
\[
f(G,T) :  \Gamma(T) \simeq  \pi_1(T) \to \pi_1(G).
\]
Denote by $N(G,T)$ the kernel of $f(G,T)$.   We have a short exact sequence 
\[
0 \to N(G,T) \to \Gamma(T) \to \pi_1(G).
\]

Denote by $N_G(T)$ the normalizer of $T$ in $G$.   Let $W$ denote the Weyl group $N_G(T)/T$.     The affine Weyl group is $W_a = N(G,T) \rtimes W$ and the extended
affine Weyl group is $W'_a = \Gamma(T) \rtimes W$; the subgroup
$W_a$ of $W'_a$ is normal. 

If $\ft - \ft_r$ denotes the union of the singular hyperplanes in $\ft$, then the \emph{alcoves} of $\ft$ are the connected component of $\ft_r$. 

The group $W_a$ operates
simply-transitively on the set of alcoves. Let
 $A$ be an an alcove. Then 
$\overline{A}$ is a fundamental domain for the operation of $W_a$
on $\ft$. 

Let $H_A$ be the stabilizer of
$A$ in $W'_a$.   Then $H_A$ is a finite abelian group which can be identified naturally with $\pi_1(G)$, see \cite[IX, p.326]{B}.    
The extended affine Weyl group  $W'_a$ 
is the semi-direct product
\[
W'_a = W_a \rtimes H_A.\]

View  $\ft$ as an additive group, and form the Euclidean group $\ft
\rtimes O(\ft)$. We have $W'_a \subset \ft \rtimes O(\ft)$ and so
$W'_a$ acts as affine transformations of $\ft$.   Now $H_A$ leaves $\overline{A}$ invariant, so $H_A$ acts as affine transformations of $\overline{A}$. 
Let $v_0, v_1, \ldots, v_n$ be the vertices of the simplex $\overline{A}$.   We will use  barycentric coordinates, so that 
\[
x = \sum_{i = 0}^n t_iv_i
\]
with $x \in \overline{A}$.  The barycentre $x_0$ has coordinates $t_j = 1,\; 0\leq j \leq n$ and so is $H_A$-fixed.   
Then $\overline{A}$ is equivariantly contractible to
$x_0$: \begin{align}\label{con} r_t(x): = tx_0 + (1-t)x
\end{align}with $0 \leq t
\leq 1$. This is an affine $H_A$-equivariant retract from
$\overline{A}$ to $x_0$.\smallskip

\begin{lem}\label{H}  Let $t_0 = \exp x_0$. There is a canonical isomorphism 
\[
W(t_0) \simeq H_A.
\]
\end{lem}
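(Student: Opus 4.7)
The plan is to construct the isomorphism as the restriction to $H_A$ of the canonical projection $W'_a = \Gamma(T) \rtimes W \to W$. Here $W(t_0)$ denotes the isotropy subgroup $\{w \in W : w \cdot t_0 = t_0\}$, and the guiding observation is that $w$ fixes $t_0 = \exp x_0$ precisely when $w \cdot x_0 - x_0$ lies in $\Gamma(T) = \ker \exp$; this is the translation piece needed to build an element of $W'_a$ stabilising $x_0$.

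The preparatory step, and the heart of the argument, is to establish the intermediate equality
\[
\Stab_{W'_a}(x_0) = H_A.
\]
One inclusion is immediate from the discussion preceding the lemma: $H_A$ acts on $\overline{A}$ by affine transformations, hence permutes the vertices $v_0, \ldots, v_n$, and therefore fixes their barycentre $x_0$. For the reverse inclusion, I would use that $W_a$ operates simply transitively on alcoves: any element of $W_a$ fixing an interior point of $A$ must preserve $A$, and by simple transitivity must be the identity. Decomposing an arbitrary stabiliser via $W'_a = W_a \rtimes H_A$ then forces it to lie in $H_A$.

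With this in hand, write elements of $W'_a$ as pairs $(\mu, w) \in \Gamma(T) \rtimes W$ acting by $(\mu, w)\cdot x = \mu + w\cdot x$, and define $\varphi \colon H_A \to W$ by $(\mu, w) \mapsto w$. If $(\mu, w) \in H_A$, then $\mu + w\cdot x_0 = x_0$, so $w\cdot x_0 - x_0 = -\mu \in \Gamma(T)$, giving $w\cdot t_0 = \exp(w\cdot x_0) = \exp(x_0) = t_0$; hence $\varphi$ takes values in $W(t_0)$ and is a homomorphism. Injectivity is immediate: if $w = 1$ then $(\mu, 1)$ fixes $x_0$ only when $\mu = 0$. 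For surjectivity, given $w \in W(t_0)$ set $\gamma := x_0 - w\cdot x_0$; this lies in $\Gamma(T)$, so $(\gamma, w) \in W'_a$ and fixes $x_0$, hence lies in $H_A$ by the intermediate step and projects to $w$.

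The main obstacle is the intermediate step identifying $\Stab_{W'_a}(x_0)$ with $H_A$: everything else is a formal consequence. Once this is secured, the isomorphism $\varphi$ falls out purely from the two semidirect-product decompositions of $W'_a$ and the defining property of $\Gamma(T)$ as the kernel of $\exp$.
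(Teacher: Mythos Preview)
Your proposal is correct and follows essentially the same approach as the paper: both identify $H_A$ with $\Stab_{W'_a}(x_0)$ and then use the projection/lifting between $W'_a$ and $W$ to relate this stabiliser to $W(t_0)$. The paper writes the isomorphism in the opposite direction, $W(t_0)\to H_A,\ w\mapsto \gamma w$ (the inverse of your $\varphi$), and simply asserts the intermediate step you take care to justify via simple transitivity of $W_a$ on alcoves; in this respect your argument is a slightly more detailed version of the same proof.
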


\begin{proof}  
Let $w \in W$.   We have $w\cdot t_0 = t_0$ if and only if there exists $\gamma \in \Gamma(T)$, uniquely determined by $w$, for which $\gamma(w(x_0)) = x_0$.  But $\gamma w$ will fix $x_0$ if and only if $\gamma w$ stabilizes $A$, i.e. $\gamma w \in H_A$.  This determines the isomorphism
\[
W(t_0) \simeq H_A, \quad w \mapsto \gamma w
\]
\end{proof}

In the special case of $\SL(3)$, the vector space $\ft$ is the Euclidean
plane $\R^2$.  The singular hyperplanes 
tessellate $\R^2$ into equilateral triangles. The interior of each
equilateral triangle is an alcove.  Barycentric subdivision refines this tessellation into
isosceles triangles.  The extended affine Weyl group $W'_a$ acts simply transitively on the set of these
isosceles triangles, but the closure $\overline{\Delta}$ of one such triangle 
is not a fundamental domain for the action of $W'_a$.  The corresponding quotient space is
\cite[IX.\S5.2]{B}:
\[
\ft/W'_a \simeq \overline{A}/H_A.\] The abelian group $H_A$ is the
cyclic group $\Z/3\Z$ which acts on $\overline{A}$ by rotation about the barycentre of $\overline{A}$ 
through $2\pi/3$.  

\section{The  spherical $C^*$-algebra}  We will focus on the $C^{*}$-summand $\fS(\cG)$ in the reduced
$C^{*}$-algebra $C^{*}_{r}(\cG)$ which corresponds to the unramified
unitary principal series, see \cite{P}. The algebra $C^{*}_{r}(\cG)$
is defined as follows. We choose a left-invariant Haar measure on
$\cG$, and form a Hilbert space $L^{2}(\cG)$. The left regular
representation $\lambda$ of $L^{1}(\cG)$ on $L^{2}(\cG)$ is given by
\[
(\lambda(f))(h)=f*h
\]
where $f \in L^{1}(\cG)$, $h \in L^{2}(\cG)$ and $*$ denotes the
convolution. The $C^{*}$-algebra generated by the image of
$\lambda$ is the reduced $C^{*}$-algebra $C^{*}_{r}(\cG)$. Let $\fK$
denote the $C^{*}$-algebra of compact operators on the standard
Hilbert space $H$.

As in \S1, the Langlands dual
of $\cG$ is the complex reductive group $\cG^{\vee}$ with maximal
torus $\cT^{\vee}$. Let $G$ be a maximal compact subgroup of
$\cG^{\vee}$, let $T$ be the maximal compact subgroup of
$\mathcal{T}^{\vee}$.

The group $\Psi(\mathcal{T})$ of unramified unitary characters of
$\mathcal{T}$ is isomorphic to $T$.  The \emph{spherical} $C^*$-algebra
is given by the fixed point algebra
\begin{align*}\label{eq:1}
\fS(\cG):&= C(T,\fK)^{W} \nonumber \\
&= \{f \in C(T,\fK):f(wt)=\fc(w:t)\cdot f(t), w\in W\}
\end{align*}
where $\mathfrak{a}(w:t)$ are normalized intertwining operators,
and \[\fc(w:t) = \Ad\, \fa(w:t)\]  as in \cite{P}. Then $\fa : W
\to C(T,U(H))$ is a $1$-cocycle:
\[
\fa(w_2w_1:t) = \fa(w_2:w_1t)\fa(w_1:t).\]

\begin{thm}  The group $K_0(\fS(\cG))$ is free abelian on $f(\mathcal{G})$ generators, and $K_1 =0$.
\end{thm}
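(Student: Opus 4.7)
The plan is to reduce the $K$-theory of $\fS(\cG)$ to that of a single fixed point, using the $H_A$-equivariant geometry of the alcove developed in Section 2.

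First I would rewrite $\fS(\cG) = C(T,\fK)^W$ as a section algebra over the closed alcove. Because the exponential map yields $T/W = \overline{A}/H_A$ and the extended affine Weyl group decomposes as $W'_a = W_a \rtimes H_A$, restriction of sections along $\exp$ induces a natural isomorphism
\[
\fS(\cG) \;\cong\; C(\overline{A}, \fK)^{H_A},
\]
with $H_A$ acting on $\fK$ by conjugation with the pullback of the cocycle $\fa$. The required compatibility at boundary points of $\overline{A}$, where wall reflections in $W_a$ fix $x$, follows from the cocycle identity for $\fa$ recorded in \cite{P}.

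Next, I would apply the $H_A$-equivariant affine retraction $r_t$ of (\ref{con}) to contract $\overline{A}$ onto its barycenter $x_0$. Pullback along $r_t$ gives a continuous path of $*$-homomorphisms from $C(\overline{A}, \fK)^{H_A}$ into itself interpolating the identity with the composition of evaluation at $x_0$ and the constant-section inclusion. Homotopy invariance of $K$-theory then gives
\[
K_*(\fS(\cG)) \;\cong\; K_*(\fK^{H_A}),
\]
where $\fK^{H_A}$ is the fixed-point algebra for $H_A \simeq W(t_0)$ (Lemma \ref{H}) acting on $\fK$ through $\Ad\,\fa(\cdot:t_0)$.

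To compute the right-hand side, I would decompose the underlying Hilbert space $H$ under $H_A$. Applying the $1$-cocycle identity $\fa(w_2 w_1 : t) = \fa(w_2 : w_1 t)\fa(w_1 : t)$ at the $H_A$-fixed point $t_0$ collapses it to a genuine unitary representation
\[
\rho := \fa(\cdot : t_0)\colon H_A \longrightarrow U(H).
\]
Since $H_A$ is finite abelian, $\rho$ decomposes $H$ into character isotypic components $H_\chi$ indexed by $\chi \in \widehat{H_A}$, so that
\[
\fK^{H_A} \;\cong\; \bigoplus_{\chi \,:\, H_\chi \ne 0} \fK,
\]
giving $K_1 = 0$ and $K_0$ free abelian on the number of characters of $H_A$ appearing in $\rho$. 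The main obstacle is to show that \emph{every} character of $H_A$ appears in $\rho$, so that the number of summands equals $|H_A| = f(\cG)$. This is the geometric counterpart of the $R$-group statement recalled from \cite{K}: the maximal $R$-group in the unramified unitary principal series of $\cG$ equals $H_A \simeq \pi_1(G)$, and the $L$-packet at $t_0$ realises this maximum, hence has $f(\cG)$ irreducible constituents, one for each character of the $R$-group. I would make this correspondence explicit using the normalized intertwining operators $\fa$ from \cite{P}.
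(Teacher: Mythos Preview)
Your overall strategy matches the paper's: pass from $T$ to the closed alcove $\overline{A}$ with its $H_A$-action, then contract to the barycentre $x_0$ and read off $K_*$ from the fibre. The gap is in the second step. You assert that pullback along $r_t$ gives $*$-homomorphisms $C(\overline{A},\fK)^{H_A}\to C(\overline{A},\fK)^{H_A}$, but this is not immediate: the $H_A$-action on the fibre $\fK$ at $x$ is $\Ad\,\fb(w:x)$, and this genuinely depends on $x$. If $f$ satisfies $f(wx)=\Ad\,\fb(w:x)\cdot f(x)$, then $g:=f\circ r_t$ satisfies $g(wx)=f(w\,r_t(x))=\Ad\,\fb(w:r_t(x))\cdot g(x)$, which is the \emph{wrong} equivariance condition unless $\fb(w:x)$ and $\fb(w:r_t(x))$ differ by a scalar. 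So as written, the homotopy does not stay inside the fixed-point algebra, and the reduction to $\fK^{H_A}$ is not justified.

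The paper addresses exactly this point by inserting a Morita step before the contraction. It invokes \cite[Theorem~2.13]{PL} (verifying its three hypotheses via Keys' $R$-group computations, including that $R(t_0)=W(t_0)$ and that the relevant representations of isotropy groups are quasi-equivalent) to obtain a strong Morita equivalence $C(\overline{A},\fK)^{H_A}\sim_{\mathrm{Morita}} C(\overline{A})\rtimes H_A \cong C(\overline{A},\End\,\C H_A)^{H_A}$, where on the right $H_A$ acts on $\End\,\C H_A$ via the regular representation, \emph{independently of the base point}. On this model the pullback $f\mapsto f\circ r_t$ does preserve equivariance, and the homotopy argument goes through to give $(\End\,\C H_A)^{H_A}\cong\C^{f}$. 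Your argument can be repaired along these lines, but you would need either that Morita step or an explicit trivialisation of the cocycle $\fb$ over $\overline{A}$ intertwining the varying action with the constant one; the latter is essentially what \cite{PL} provides.
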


\begin{proof} We have the exponential map $\exp: \ft \to T$. We lift $f$ from
$T$ to a periodic function $F$ on $\ft$, and lift $\fa$ from a
$1$-cocycle $\fa: W \to C(T,U(H))$ to a $1$-cocycle  $\fb : W'_a
\to C(\ft, U(H))$:
\[ F(x): = f(\exp x), \quad
\fb(w':x): = \fa(w: \exp x)\] with $w' = (\gamma, w)$.  The
semidirect product rule is
\[w'_1w'_2 = (\gamma_1,w_1)(\gamma_2,w_2) = (\gamma_1w_1(\gamma_2),
w_1w_2).\] Note that $\fb$ is still a $1$-cocycle:
\begin{align*} 
\fb(w'_2w'_1:x) =& \fa(w_2w_1: \exp x) \\
=& \fa(w_2:w_1(\exp x)\fa(w_1: \exp x)
\\
=& \fa(w_2: \exp w_1x)\fb(w'_1:x)
\\
=& \fa(w_2: \exp \gamma w_1x)\fb(w'_1:x)
\\
=& \fb(w'_2: w'_1 x)\fb(w'_1:x) \nonumber\\
\end{align*}
Now we define
\begin{align*}
\fd(w:x): = Ad \, \fb(w:x), \quad \quad w \in W'_a
\end{align*}
 The fixed algebra $\fS(\cG)$ is as follows:
\[
\{F \in C(\ft,\fK):F(wx)=\fd(w:x) \cdot F(x), w\in W'_a,\;x \in
\ft,\, F \;\textrm{periodic}\}
\]

Now $F$ is determined by its restriction to $\overline{A}$. Upon
restriction, we obtain
\[\fS(\cG) \simeq \{f \in
C(\overline{A},\fK):f(wx)=\fd(w:x)\cdot f(x), w\in H_A,\;x \in
\overline{A}\}
\]

We will
write $H = H_A$.  Let \[ \fA: = C(\overline{A},\fK), \quad \quad
\fB: = C(\overline{A},\fL(V))\]so that $\fS(\cG) = \fA^H$.  

   We apply \cite[Theorem 2.13]{PL}.  We have to verify that all three conditions of this theorem are met.
Let $R(t_0)$ denote the $R$-group attached to $t_0$.  The calculations of Keys\cite{K} show that
$R(t_0) = W(t_0)$ and so condition (**) in \cite{PL} is met.   We note that $\{\{\fb(w:x) : w \in H_A\}  = \{\fa(w:t) : w \in W(t_0)\}$ and that $w \mapsto \fa(w: t_0)$ is a unitary representation of $H_A$.  For this representation we have
\begin{align*}
\fa(w : t_0) = \rho_1(w)P_1 + \cdots  + \rho_f(w)f P_f
\end{align*}
where $\rho_1, \ldots, \rho_f$ are the characters of $H_A$, and $P_1,   \ldots, P_f$ are the orthogonal projections onto the $f$ irreducible subspaces 
$V_1, \ldots, V_f$ of the induced representation $\Ind_{\cB}^{\cG}\, t_0$.    The representation $\fa( - : t_0)$ is quasi-equivalent to the regular representation of $H_A$, i.e. it contains the same characters (with different multiplicity), and so condition (***) is met.  Let $W(t)$ denote the isotropy subgroup of $t \in T$. We have to compare the two following representations of representations of $W(t)$:
\begin{align}\label{reps}
w \mapsto \fa(w : t_0) _{| W(t)} , \quad \quad w \mapsto \fa(w : t)
\end{align}
These two representations are quasi-equivalent, again by inspecting the results of Keys \cite {K}.   Since $f$ can be non-prime only in cases $A_n$ and $D_n$, these are the only two cases to be checked.   In each subspace $V_1 , \ldots, V_f $ choose an increasing sequence $e_1^n, \ldots, e_f^n$ of projections tending strongly to the identity operator on that subspace, and set $e_n = e_1^n + \cdots + e_f^n$.  The compressions, with respect to the projections $(e_n)$, of the two representations  in (\ref{reps}) remain quasi-equivalent for each $n$:  they each contain every character of $W(t)$.   The  condition (*) is now met.

The three conditions of  \cite[Theorem 2.13]{PL} are met.  This yields a strong Morita equivalence
 \[
 \fA^H \simeq_{Morita} C(\overline{A}) \rtimes H
 \]
 We recall that
 \[
 C(\LA) \rtimes H \simeq C(\LA, \End \, \C H)^H
 \]
 where $H$ acts via the regular representation on its group algebra $\C H$.   
Define
 \[\fM: = C(\LA, \End \, \C H)^H, \quad \quad \fN : = (\End \, \C H)^H
 \]
 and define the homomorphisms 
 \[ 
 f : \fM \to \fN, \quad \sigma \mapsto \sigma(x_0)
 \]
 \[
 g : \fN \mapsto \fM, \quad Y \mapsto  Y(-).
 \]
 Then $(f \circ g) = id_{\fN}$ and $g \circ f$ sends the map $\sigma$ to the constant map $\sigma(x_0)(-)$.  With $r_t$ as in Eqn.(\ref{con}), we define
 \[
 F_t (\sigma) = \sigma \circ r_t
 \]
 then $F_1 = g \circ f$ and $F_0 = id_{\fM}$. So $g \circ f \sim_h id_{\fM}$.  Therefore,  $\fM$ and $\fN$ are homotopy equivalent $C^*$-algebras, and have the same $K$-theory.
The fixed $C^*$-algebra $\fM$ is homotopy equivalent to its fibre $\fN$ over the fixed point $x_0$. 
  
  \medskip
  
 We therefore have
 \begin{align*}
 \fS(\cG)  = \fA^H \sim_{Morita} \fM \sim_h  \fN \simeq \C^f
 \end{align*}
 The $C^*$-algebra  functors $K_0$ and $K_1$ are invariants  of strong Morita equivalence and of homotopy type, whence
\begin{align*}
K_0 \,\fS(\cG) & = \Z^f \\
 K_1 \,\fS(\cG) & = 0
\end{align*}
where $f = f(\cG)$. 
\end{proof}

Let $\cJ$ be a good maximal compact subgroup of $\cG$. Choose  left-invariant Haar measure $\mu$ 
on $\cG$. Let
\begin{align*}
e_{\cJ}(x) & = \mu(\cJ)^{-1} \; \textrm{if} \; x \in \cJ\\
& = 0 \quad \quad \quad  \textrm{if} \; x \notin \cJ
\end{align*}
Then $e_{\cJ}$ is an idempotent in the reduced $C^*$-algebra 
$C^*_r(\cG)$. 
Let $\cJ_1, \ldots, \cJ_f$ be an enumeration of the good maximal compact subgroups of $\cG$, one from each conjugacy class in $\cG$.   Let $e_{\cJ_1}, \ldots, e_{\cJ_f}$ be the corresponding idempotents.  Their Fourier transforms are rank-one projections in $\fK(E_1), \ldots, \fK(E_f)$, and serve as generators for $K_0$.

\section{The $L$-packet with $f(\cG)$ constituents}

The $H_A$-fixed point  $t_0 \in T$ determines an $L$-parameter:
\[
\phi : W_F \times \SL(2,\C) \to G, \quad \quad (y \Phi_F^n,Y) \mapsto t_0^n
\]
with $W_F$ the Weil group of $F$, $I_F$ the inertia subgroup of $W_F$, $y \in I_F$ , $\Phi_F$ a geometric Frobenius  in  $W_F$, $Y \in \SL(2,\C)$.

The $L$-parameter $\phi$ determines an $L$-packet $\Pi(t_0)$. The constituents of this $L$-packet have Kazhdan-Lusztig parameters
\[
\{(t_0,1,\rho) : \rho \in \Irr \, \pi_0 \, \mathcal{Z}_G(t_0)\}
\]
 where $\mathcal{Z}_G(t_0)$ is the centralizer of $t_0$ in $G$.  We have 
 \[
 \mathcal{Z}_G(t_0) = \cT^{\vee} \cdot W(t_0)
 \]
 and so
 \[
 \pi_0 \, \cZ_G(t_0) = W(t_0).
 \]
By Lemma (\ref{H}), we have $W(t_0) \simeq H_A$.  It follows that the third Kazhdan-Lusztig parameter $\rho$ is a character of the abelian group $H_A$.  All such characters are allowed.   Since $H_A$ can be naturally identified with $\pi_1(G)$, the order of $H_A$ is the connection index $f(\cG)$.   As a consequence, the number of irreducible constituents in the $L$-packet $\Pi(t_0)$ is  equal to the connection index $f(\cG)$. 

\section{The Baum-Connes correspondence: a refinement}   
  Let $\cG$ be a reductive $p$-adic group.   The Baum-Connes correspondence is a definite isomorphism of abelian groups
\begin{align*}
K_j^{top}(\cG) \simeq K_j \, C^*_r(\cG)
\end{align*}
with $j = 0,1$, see \cite{VL}.  The left-hand-side, defined in terms of $K$-cycles, has never been directly computed for a noncommutative reductive $p$-adic group.   A result of Higson-Nistor 
\cite{H} and Schneider  \cite{PS} allows us to replace the left-hand-side with the chamber homology groups.   Chamber homology has been directly computed for only two noncommutative $p$-adic groups:  $\SL(2)$, see \cite{BHP} and $\GL(3)$, see \cite{AHP}.   In the case of $\GL(3)$, one can be sure that representative cycles in all the homology groups have been constructed only by checking with the right-hand-side.    
In other words, one always has to have an independent computation of the right-hand-side.  
\medskip

 We now reflect on the conjecture in \S7 of \cite{ABP3}.    This is the geometric conjecture  developed in \cite{ABP1},\cite{ABP2},\cite{ABP3},\cite{ABP4} adapted to the $K$-theory of $C^*$-algebras.      We will focus on the Iwahori $C^*$-algebra  $\fI(\cG) \subset C^*_r(\cG)$.    The primitive ideal spectrum of $\fI(\cG)$ can be identified with the irreducible tempered representations of $\cG$ which admit nonzero Iwahori-fixed vectors. 
 
 \emph{In this special case}, the conjecture in \cite[\S7]{ABP3} asserts that 
\begin{align}\label{KKK}
K_j(\fI(\cG)) = K^j_{W}(T) 
\end{align}
with $j = 0,1$.   Here $K^j_W(T)$	 is the classical topological equivariant K-theory \cite[\S2.3]{AHS}
 for the Weyl group $W$ acting on the compact torus $T$. 
 
 We quote a recent theorem of Solleveld: 
\begin{align*}
K_j(\fI(\cG))\otimes_{\Z}\Q \simeq K_j( C^*(W'_a))\otimes_{\Z}\Q
\end{align*}
with $j = 0,1$.     This theorem is a special case of \cite[Theorem 5.1.4]{S2} when $\Gamma = 1$.
We have 
\begin{align*}
C^*(W'_a) & =   C^*(\Gamma(T) \rtimes W) \\
& \simeq C(T) \rtimes W
\end{align*}
by a standard Fourier transform.  By the Green-Julg theorem \cite[Theorem 11.7.1]{Black}, we have
\begin{align*}
K_j(C(T) \rtimes W) \simeq K_W^j(T).
\end{align*}
Therefore we have
\begin{align}\label{Sol}
K_j(\fI(\cG))\otimes_{\Z}\Q \simeq K^j_W(T)\otimes_{\Z}\Q.
\end{align}
so that Solleveld establishes Eqn.(\ref{KKK})  \emph{modulo torsion}.    
\smallskip 

 Applying the equivariant Chern character for discrete groups \cite{BC} gives a
map
\begin{align}\label{ch}
ch_W : K^j_W(T) \to \bigoplus_{l}  H^{j+2l}(T\q W; \C) 
\end{align}
 which becomes an isomorphism when $K^j_W(T)$ is tensored with $\C$.
Hence the geometric conjecture at the level of $C^*$-algebra $K$-theory gives a much finer and more precise formula for
$K_*C^*_r(\cG)$ than Baum-Connes alone provides; see \cite{ABP3}.   

In the formula (\ref{ch}) for the equivariant Chern character, $T\q W$ denotes the extended quotient of $T$ by $W$. This is easily computed, as is clear from its definition, which we recall briefly.     The quotient $T/W$ is obtained by collapsing each orbit to a point and is again a compact Hausdorff space.  
  For $t \in T$, $W(t)$ denotes the stabilizer group of $t$:
\[
W(t): = \{w \in W : wt = t\}.
\]
Denote by $c(W(t))$ the set of conjugacy classes of $W(t)$.  The extended quotient, denoted $T\q W$, is constructed by replacing each orbit with $c(W(t))$ where $t$ can be any point in the orbit. This construction is done as follows.   First, set
\[
\widetilde{T}: = \{(w,t) \in W \times T : wt = t\}.
\]
Then $\widetilde{T} \subset W \times T$.  The group $W$  acts on $\widetilde{T}$:
\[
W \times \widetilde{T} \to \widetilde{T}, \quad \quad \alpha(w,t) = (\alpha w \alpha^{-1}, \alpha t)
\]
with $(w,t) \in \widetilde{T}, \alpha \in W$.  The extended quotient is defined by
\[
T\q W: = \widetilde{T}/W.
\]
Hence the extended quotient $T\q W$ is the ordinary quotient for the action of $W$ on $\widetilde{T}$. 

\section{Equivariant line bundles over $T$}  It follows from the $C^*$-Plancherel theorem \cite{P}  that the $C^*$-ideal $\fS(\cG)$ is a direct summand of the $C^*$-algebra $\fI(\cG)$.  We therefore have
\begin{align}\label{K}
K_0\, \fS(\cG) \hookrightarrow K_0 \, \fI(\cG).
\end{align}

When we combine (\ref{K}) with (\ref{Sol}), we obtain
\begin{align}\label{KK}
K_0 \, \fS(\cG) \hookrightarrow K^0_W(T) \otimes \C
\end{align}

We will now make the map (\ref{KK}) explicit.  We recall that the group $\psi(\cT)$ of unramified unitary characters of the maximal torus $\cT$ is isomorphic to the compact torus $T$.    The Hilbert space of the induced representation
\[
\pi_t : = \Ind_{\cB}^{\cG} \, t
\]
 will be denoted $E_t$ with $ t \in T$.   We recall from \S 4  that
\[
\Ind_{\cB}^{\cG} \, t_0 = \Pi(t_0)
\]
is an $L$-packet with $f$ irreducible constituents.

The collection $\{E_t : t \in T\}$ forms a $W$-equivariant bundle of Hilbert spaces via the action of the intertwining operators:
\[
\fa(w:t) : E_t \to E_{wt}
\]
We will construct $1$-dimensional sub-bundles of fixed vectors as follows.
 Let $\cJ$ be a good maximal compact subgroup of $\cG$. 
Let $e_{\cJ}$ be the corresponding idempotent in $C^*_r(\cG)$.   Let $\pi$ be a unitary representation of $\cG$ on the Hilbert space $E$.  We define the linear operator
\[
\pi(\phi): = \int_{\cG} \phi(g)\pi(g) dg
\]
whenever $\phi$ is a suitable test-function.   When $\phi = e_{\cJ}$, the linear operator $\pi(e_{\cJ})$ is a projection onto the subspace $E^{\cJ}$ of $\cJ$-fixed vectors.   As we vary $\pi$ in the primitive ideal spectrum $\Prim \, \fS(\cG)$, we obtain 
 a continuous field of rank $1$ projections over the compact torus $T$.   This idempotent projects onto the complex hermitian line bundle $E^{\cJ}$ over $T$ of $\cJ$-fixed vectors. The fibre of $E^{\cJ}$ at the point $t$ is given by
 \[ 
E^{\cJ}_t =  \pi_t (e_{\cJ}) E_t
 \]
 
 This creates a $W$-equivariant line bundle over $T$. As we vary $\cJ$ among $\cJ_1, \ldots, \cJ_f$, we obtain 
 $f$ distinct $W$-equivariant line bundles over $T$.  To show that they are distinct, we proceed as follows.  
 
 Let $\cJ = \cJ_k$. Then, for each $1 \leq k \leq f$,  the fibre of $E^{\cJ}$ at the point $t_0 \in T$ is  a $1$-dimensional subspace  of the irreducible subspace $V_k$ of the $L$-packet $\Pi(t_0)$, by a result of Keys on class$-1$ representations  [Theorem, \S 4]\cite{K}. 
The irreducible subspaces $V_1, \ldots, V_f$ are \emph{inequivalent} representations of $\cG$, and so the $W$-equivariant line bundles $E^{\cJ}$ with $1 \leq k \leq f$ are distinct.  This establishes the next result.
 
 \begin{thm}\label{class}The injective homomorphism (\ref{KK}) is given explicitly by
 \begin{align*}\label{KKK}
K_0\, \fS(\cG) \hookrightarrow K^0_W(T) \otimes \C, \quad \quad e_{\cJ} \mapsto E^{\cJ}
\end{align*}
with $\cJ = \cJ_1, \ldots, \cJ_f$. The image of this map is a free abelian group of rank  $f$.
\end{thm}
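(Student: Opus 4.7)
The plan is to trace through the chain of identifications
\[
K_0\,\fS(\cG) \hookrightarrow K_0\,\fI(\cG) \to K_0(C(T)\rtimes W)\otimes\C \simeq K^0_W(T)\otimes\C
\]
with the specific generators $[e_{\cJ_k}]$, $1\le k\le f$, in hand, and to show that these land on the classes $[E^{\cJ_k}]$ and remain linearly independent. The well-definedness of the top arrow has been recorded in \eqref{K} from the $C^*$-Plancherel decomposition; the second arrow is Solleveld's rationalization together with the standard Fourier transform; the third is Green--Julg. What needs doing is to identify the image of $[e_{\cJ_k}]$ explicitly.

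First I would describe $\fS(\cG)$ in its fixed-point form $C(T,\fK)^W$ as in Section~3, under which a class in $K_0$ is represented by a $W$-equivariant continuous field of finite-rank projections $t\mapsto p(t)$ on the principal series bundle $\{E_t\}_{t\in T}$. The idempotent $e_{\cJ}\in C^*_r(\cG)$, viewed through the spherical summand, acts in the induced representation $\pi_t=\Ind_{\cB}^{\cG} t$ by the projection $\pi_t(e_{\cJ})$ onto the space $E_t^{\cJ}$ of $\cJ$-fixed vectors; by the theorem of Borel--Casselman this projection is of rank one for every $t$ in $T$, and its image varies continuously and $W$-equivariantly. Thus the class $[e_{\cJ}]\in K_0\,\fS(\cG)$ is represented, under the Morita equivalence with $C(T)\rtimes W$ and Green--Julg, by the class of the $W$-equivariant hermitian line bundle $E^{\cJ}$ in $K^0_W(T)$. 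This realises the map on generators as $e_{\cJ}\mapsto E^{\cJ}$.

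To show that the $f$ line bundles $E^{\cJ_1},\ldots,E^{\cJ_f}$ give linearly independent classes in $K^0_W(T)\otimes\C$, I would restrict to the distinguished point $t_0\in T$ furnished by Section~4. Evaluation at $t_0$ factors the equivariant K-theory through the representation ring $R(W(t_0))$ (or equivalently through the fibre of the extended quotient $T\q W$ over the orbit of $t_0$). The fibre $E^{\cJ_k}_{t_0}=\pi_{t_0}(e_{\cJ_k})E_{t_0}$ is, by Keys's theorem on class-one representations \cite{K}, a one-dimensional subspace sitting inside exactly one of the inequivalent irreducible constituents $V_1,\ldots,V_f$ of the $L$-packet $\Pi(t_0)$. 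Since $W(t_0)\simeq H_A$ acts on $V_1,\ldots,V_f$ by the $f$ distinct characters of the abelian group $H_A$, the $f$ fibre classes at $t_0$ are the $f$ distinct one-dimensional representations of $W(t_0)$, hence linearly independent in $R(W(t_0))\otimes\C$. Injectivity of the map and freeness of its image of rank $f$ follow at once, matching the count $K_0\,\fS(\cG)=\Z^f$ from Theorem~\ref{main}.

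The main obstacle is the middle step: certifying that the abstract Morita/Green--Julg identifications carry the concrete idempotent $e_{\cJ}\in C^*_r(\cG)$ to the geometric class $[E^{\cJ}]\in K^0_W(T)$ rather than to some twist of it. This requires checking that the continuous field of rank-one projections $t\mapsto \pi_t(e_{\cJ})$ is genuinely the one singled out by the isomorphism $C(T)\rtimes W\simeq C(T,\End\,\C H)^H$ used in the proof of Theorem~\ref{main}, i.e.\ that the intertwining cocycle $\fa(w:t)$ acts trivially on the one-dimensional fibres of $E^{\cJ}$ in the sense needed for equivariance. Once this compatibility is verified, the evaluation-at-$t_0$ argument via Keys closes the proof.
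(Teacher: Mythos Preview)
Your proposal is correct and follows essentially the same line as the paper: both construct the rank-one field $t\mapsto \pi_t(e_{\cJ})$, package it as the $W$-equivariant line bundle $E^{\cJ}$, and then separate the $f$ bundles by invoking Keys's class-one theorem at the distinguished point $t_0$. Your restriction-to-$R(W(t_0))$ argument for linear independence is in fact slightly more explicit than the paper's (which stops at ``distinct''), and the compatibility obstacle you flag is not addressed in the paper either.
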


For the exceptional groups $\cG = E_8,\; F_4$ or $G_2$, the connection index $f = 1$, and so the map (\ref{K}) is especially simple. There is no $L$-packet. The $K$-theory of the unramified unitary principal series of $\cG$ is that of a point.  

\bigskip

\emph{Acknowledgement}. We would like to thank Anne-Marie Aubert for her careful reading of the manuscript.

\end{document}